\documentclass[reqno,centertags]{amsart}
%%%%%%%%%%%%%%%%%%%%%%%%%%%%%%%%%%%%%%%%%%%%%%%%%%%%%%%%%%%%%%%%%%%%%%%%%%%%%%%%%%%%%%%%%%%%%%%%%%%%%%%%%%%%%%%%%%%%%%%%%%%%%%%%%%%%%%%%%%%%%%%%%%%%%%%%%%%%%%%%%%%%%%%%%%%%%%%%%%%%%%%%%%%%%%%%%%%%%%%%%%%%%%%%%%%%%%%%%%%%%%%%%%%%%%%%%%%%%%%%%%%%%%%%%%%%
\usepackage{amsmath}
\usepackage{amssymb}
\usepackage{amsfonts}

\setcounter{MaxMatrixCols}{10}
%TCIDATA{OutputFilter=LATEX.DLL}
%TCIDATA{Version=5.50.0.2890}
%TCIDATA{<META NAME="SaveForMode" CONTENT="1">}
%TCIDATA{BibliographyScheme=Manual}
%TCIDATA{Created=Friday, November 14, 2014 00:35:43}
%TCIDATA{LastRevised=Wednesday, March 15, 2017 20:32:43}
%TCIDATA{<META NAME="GraphicsSave" CONTENT="32">}
%TCIDATA{<META NAME="DocumentShell" CONTENT="Articles\SW\AMS Journal Article">}
%TCIDATA{Language=American English}
%TCIDATA{CSTFile=amsartci.cst}

\newtheorem{theorem}{Theorem}
\theoremstyle{plain}

\newtheorem{corollary}{Corollary}

\newtheorem{problem}{Problem}

\numberwithin{equation}{section}

\input{tcilatex}

\begin{document}
\title[]{Surfaces of revolution of finite $III$-type }
\author{Hassan Al-Zoubi*}
\address{Department of Mathematics, Al-Zaytoonah University of Jordan, P.O. Box 130, Amman, Jordan 11733}
\email{dr.hassanz@zuj.edu.jo*}
%\author{Dea'a Al-Sraheen}
%\address{Department of Accounting,  Al-Zaytoonah University of Jordan, P.O. Box 130, Amman, Jordan 11733}
%\email{Deaa.Sraheen@zuj.edu.jo}
%\author{Nancy Al-Ramahi}
%\address{Department of Basic Sciences, Al-Zaytoonah University of Jordan, P.O. Box 130, Amman, Jordan 11733}
%\email{N.Alramahi@zuj.edu.jo}
\author{Tareq Hamadneh}
\address{Department of Mathematics, Al-Zaytoonah University of Jordan, P.O.
Box 130, Amman, Jordan 11733}
\email{t.hamadneh@zuj.edu.jo}
%\email{w.almashaleh@zuj.edu.jo}
%\author{Saleh Ali Alomari}
%\address{Faculty of Science and Information Technology, Jadara University, Jordan}
%\email{omari08@jadara.edu.jo}
\date{}
\subjclass[2010]{53A05}
\keywords{Surfaces in the Euclidean 3-space, Surfaces of finite Chen-type, Beltrami operator, Surfaces of revolution}

\begin{abstract}
In this paper, we consider surfaces of revolution in the 3-dimensional Euclidean space $\mathbb{E}^{3}$ with nonvanishing Gauss curvature. We introduce the finite Chen type surfaces with respect to the third fundamental form of the surface. We present a special case of this class of surfaces of revolution in $\mathbb{E}^{3}$, namely, surfaces of revolution where the sum of the radii of the principal curvature $R$ is constant.
\end{abstract}

\maketitle

\section{Introduction}
In 1983 B. Y. Chen introduces the notion of Euclidean immersions of finite type, \cite{C3}, \cite{C14} and from that time on the research into surfaces of finite type has grown up as one can see in the literature in this area. Many results in this field have been collected in \cite{C7}. In this respect, S. Stamatakis and H. Al-Zoubi restored attention to this theme by introducing the notion of surfaces of finite type corresponding to the second or third fundamental forms in the following way \cite{S1}:

A surface $S$ is said to be of finite type corresponding to the fundamental form $J$, or briefly of finite $J$-type, where $J = II, III$, if the position vector $\boldsymbol{x}$ of $S$ can be written as a finite sum of nonconstant eigenvectors of the Laplacian $\Delta ^{J}$, that is,
\begin{equation}  \label{1}
\boldsymbol{x}=\boldsymbol{c}+\sum_{i=1}^{k}\boldsymbol{x}_{i},\quad
\Delta ^{J}\boldsymbol{x}_{i}=\lambda _{i}\,\boldsymbol{x}_{i},\quad
i=1,\dotsc ,k,
\end{equation}%
where $\boldsymbol{c}$ is a fixed vector and $\lambda _{1},\lambda_{2},\dotsc ,\lambda _{k}$ are eigenvalues of the operator $\Delta^{J}$. In particular, if all eigenvalues $\lambda _{1},\lambda _{2},\dotsc ,\lambda_{k}$ are mutually distinct, then $S$ is said to be of finite $J$-type $k$. When $\lambda _{i}=0$ for some $i=1,\dotsc ,k$, then $S$ is said to be of finite null $J$-type $k$. Otherwise, $S$ is said to be of infinite type.

In general when $S$ is of finite type $k$, it follows from (\ref{1}) that there exists a monic polynomial, say $F(x)\neq 0,$ such that $F(\Delta ^{J})(\boldsymbol{x}-\boldsymbol{c})=\mathbf{0}.$ Suppose that $F(x)=x^{k}+\sigma_{1}x^{k-1}+...+\sigma _{k-1}x+\sigma _{k},$ then coefficients $\sigma _{i}$ are given by

\begin{eqnarray}
\sigma _{1} &=&-(\lambda _{1}+\lambda _{2}+...+\lambda _{k}),  \notag \\
\sigma _{2} &=&(\lambda _{1}\lambda _{2}+\lambda _{1}\lambda_{3}+...+\lambda_{1}\lambda _{k}+\lambda _{2}\lambda _{3}+...+\lambda _{2}\lambda
_{k}+...+\lambda _{k-1}\lambda _{k}),  \notag \\
\sigma _{3} &=&-(\lambda _{1}\lambda _{2}\lambda _{3}+...+\lambda_{k-2}\lambda _{k-1}\lambda _{k}),  \notag \\
&&.............................................  \notag \\
\sigma _{k} &=&(-1)^{k}\lambda _{1}\lambda _{2}...\lambda _{k}.  \notag
\end{eqnarray}
Therefore, the position vector $\boldsymbol{x}$ satisfies the following equation (see \cite{A3})

\begin{equation*}  \label{2}
(\Delta^{J})^{k}\boldsymbol{x}+\sigma_{1}(\Delta^{I})^{k-1}\boldsymbol{x}+...+\sigma_{k}( \boldsymbol{x}-\boldsymbol{x}_{0})=\boldsymbol{0}.
\end{equation*}

Surfaces of finite type with respect to the second or third fundamental forms became a topic of active research in the last years. In this paper we will focus on surfaces which are of finite type corresponding to the third fundamental form only.

Up to now the only known surfaces of finite $III$-type are \cite{S1}

\begin{itemize}
\item the spheres, which actually are of finite $III$-type 1,
\item the minimal surfaces, which are of finite null $III$-type 1, and
\item the parallel surfaces to the minimal surfaces, which in fact are of finite null $III$-type 2.
\end{itemize}

%In particular, S. Stamatakis and H. Al-Zoubi in \cite{S1} proved the following
%\begin{theorem}\label{T1}
%A surface $M$ in $\mathbb{E}^{3}$ is of null $III$–type 1 if and only if $M$ is minimal.
%\end{theorem}
%\begin{theorem}\label{T2}
%A surface $M$ in $\mathbb{E}^{3}$ is of $III$–type 1 if and only if $M$ is part of a sphere.
%\end{theorem}
%\begin{theorem}\label{T3}
%Every parallel surface $M^{*}$ of a surface $M$ of finite $III$–type $k$ is of $III^{*}$- type $k$ or $k+1$.
%\end{theorem}
%\begin{theorem}\label{T4}
%Every parallel surface $M^{*}$ of a minimal surface $M$ is of null $III^{*}$- type 2 .
%\end{theorem}
%\begin{theorem}\label{T5}
%Let $M$ be a surface satisfying $R= \mu$ = const. $\neq 0$, which is not part of a sphere. Then $M$ is of null $III$- type 2 .
%\end{theorem}
%\noindent where $R:=\frac{2H}{K}$ is the sum of the principal radii of curvature, $K$ is the Gaussian curvature and $H$ is the mean curvature of $S$.
So, we raise the following question which seems to be interesting:
\begin{problem}  \label{(p1)}
Determine all surfaces of finite $III$-type in the Euclidean 3-space.
\end{problem}

With the aim of getting an answer to the above problem, it is worthwhile investigating the classification of surfaces in the Euclidean space $\mathbb{E}^{3}$ in terms of finite $III$-type, by studying important families of surfaces. More precisely, ruled surfaces \cite{A4}, tubes \cite{A5} and quadrics \cite{A6} are surfaces of the only known examples in $\mathbb{E}^{3}$. However, for other classical families of surfaces, such as surfaces of revolution, translation surfaces, quadric surfaces, cyclides of Dupin, spiral surfaces and helicoidal surfaces, the classification of its finite $III$-type surfaces is not known yet.

On the other hand, one can generalize the problem mentioned above by studying surfaces in $\mathbb{E}^{3}$ whose position vector $\boldsymbol{x}$ satisfies the following condition
\begin{equation}
\Delta ^{III}\boldsymbol{x} =A\boldsymbol{x},  \label{3}
\end{equation}%
where $A\in \mathbb{Re}^{3\times 3}$.

In view of this, surfaces satisfying condition (\ref{3}) are said to be of coordinate finite $III$-type \cite{D3}. Here again, we also
pose the following problem
\begin{problem}  \label{(p2)}
Determine all surfaces in the Euclidean 3-space of coordinate finite $III$-type.
\end{problem}

In connection with this problem, it was shown that the only surfaces of revolution satisfying condition (\ref{3}) are the spheres and the catenoids \cite{S2}. In \cite{A1} this problem was studied for two classes of surfaces, namely, ruled surfaces and quadrics. In fact authored ascertained that helicoids are the only ruled surfaces in the 3-dimensional Euclidean space that satisfy (\ref{3}), meanwhile, spheres are the only quadric surfaces that satisfy (\ref{3}). Next, in \cite{A2} it is proved that Scherk's surface is the only translation surface in the 3-dimensional Euclidean space that satisfies (\ref{3}).

In order to achieve our goal, we introduce a formula for $\Delta ^{III}\boldsymbol{x}$ and $\Delta ^{III}\boldsymbol{n}$ by using tensors calculations in the second section, where $\boldsymbol{n}$ denotes the Gauss map of $S$. Further, in the last section, we continue our study by proving finite type surfaces for an important class of surfaces, namely, surfaces of revolution in the Euclidean 3-space of which the sum of the radii of the principal curvature is constant.

\section{Basic concepts}
Let $S$ be a smooth surface in $\mathbb{E}^{3}$ given by a patch $\boldsymbol{x} = \boldsymbol{x}(u^{1}, u^{2})$ on a region $U: = I \times \mathbb{R} \,\,\,(I\subset \mathbb{R}$ open interval) of $\mathbb{R}^{2}$ whose Gaussian curvature never vanishes. %in which does not contain parabolic points.
We denote by
\begin{equation}  \label{20}
I = g_{ij}du^{i}du^{j},\ \ \    II = b_{ij}du^{i}du^{j}, \ \ \   III = e_{ij}du^{i}du^{j}
\end{equation}
the first, second and third fundamental forms of $S$ respectively.
For two sufficiently differentiable functions $f(u^{1}, u^{2})$ and $g(u^{1}, u^{2})$ on $S$ the first differential parameter of Beltrami with respect to the fundamental form $J$, where $J = I, II, III$ is defined by \cite{H1}
\begin{equation}  \label{21}
\nabla^{J}(f,g)=a^{ij}f_{/i}g_{/j},
\end{equation}
where $f_{/i}:=\frac{\partial f}{\partial u^{i}}$ and $a^{ij}$ denote the components of the inverse tensor of $g_{ij}, b_{ij}, e_{ij}$ for $J = I, II$ and $III$ respectively. The second Beltrami-Laplace operator with respect to the fundamental form $J$ of $S$ is defined by

\begin{equation}  \label{22}
\Delta ^{J}f =-a^{ij}\nabla^{J}_{i} f_{j},
\end{equation}
where $f$ is sufficiently differentiable function, $\nabla^{J}$ is the covariant derivative in the $u^{i}$ direction with respect to the fundamental form $J$ and $(a^{ij})$ stands as in definition (\ref{21}) for the inverse tensor of $(g_{ij}), (b_{ij})$ and $(e_{ij})$ for $J = I, II$ and $III$ respectively \cite{A7}.

Applying (\ref{22}) for the position vector $\boldsymbol{x}$ corresponding to the third fundamental form we find \cite{S1}
\begin{equation}  \label{23}
\triangle^{III}\boldsymbol{x}=grad^{III}R- R\boldsymbol{n}.
\end{equation}

%where $H$ denotes the mean curvature of $S$.
%where the map $\boldsymbol{n} : S \rightarrow M^{2}$ which sends each point of $S$ to the unit normal vector to $S$ at the point is called the Gauss map of the surface $S$, where $M^{2}$ is the unit sphere in $\mathbb{E}^{3}$ centered at the origin.
%From (\ref{4}) we know the following two facts \cite{T1}

%\begin{itemize}
%\item $S$ is minimal if and only if all coordinate functions of $\boldsymbol{x}$ are eigenfunctions of $\Delta ^{I}$ with eigenvalue 0.

%\item $S$ lies in an ordinary sphere $M^{2}$ if and only if all coordinate functions of $\boldsymbol{x}$ are eigenfunctions of $\Delta ^{I}$ with a fixed nonzero eigenvalue.
%\end{itemize}

By applying (\ref{22}) for the normal vector $\boldsymbol{n}$ we get \cite{S1}
\begin{equation}  \label{24}
\Delta ^{III}\boldsymbol{n}=2\boldsymbol{n},
\end{equation}

From (\ref{24}) we have \cite{S1}
\begin{corollary}
The Gauss map of every surface $S$ in $\mathbb{E}^{3}$ is of finite $III$-type 1. The corresponding eigenvalue is $\lambda = 2$.
\end{corollary}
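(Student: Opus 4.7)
The plan is to observe that the corollary is essentially an immediate consequence of formula (\ref{24}), so the task reduces to verifying that the identity $\Delta^{III}\boldsymbol{n} = 2\boldsymbol{n}$ exactly matches the definition of finite $III$-type 1 given in (\ref{1}) applied to the Gauss map in place of the position vector.

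First I would set up the decomposition. Take the fixed vector $\boldsymbol{c} = \boldsymbol{0}$ and $k = 1$, writing $\boldsymbol{n} = \boldsymbol{c} + \boldsymbol{n}_{1}$ with $\boldsymbol{n}_{1} := \boldsymbol{n}$. Then (\ref{24}) immediately gives $\Delta^{III}\boldsymbol{n}_{1} = 2\,\boldsymbol{n}_{1}$, so $\boldsymbol{n}_{1}$ is an eigenvector of $\Delta^{III}$ with eigenvalue $\lambda_{1} = 2$. Since the sum on the right of (\ref{1}) has only one term, the number of distinct eigenvalues is trivially $k = 1$.

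The only point that requires a brief justification is the nonconstancy of $\boldsymbol{n}_{1}$, since the definition in (\ref{1}) demands the eigenvectors in the decomposition to be nonconstant. Here the standing assumption of the section is that the Gaussian curvature $K$ of $S$ never vanishes; equivalently, the Weingarten map is everywhere nonsingular, so the Gauss map $\boldsymbol{n}: S \to \mathbb{S}^{2}$ is a local diffeomorphism and in particular cannot be constant on any open subset. Hence $\boldsymbol{n}_{1}$ is a genuine nonconstant eigenvector.

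Putting these two observations together, the conditions of (\ref{1}) are satisfied with $k = 1$ and $\lambda_{1} = 2$, so $\boldsymbol{n}$ is of finite $III$-type 1 with corresponding eigenvalue $\lambda = 2$. There is no real obstacle in the argument: the content of the corollary is already carried by formula (\ref{24}), and the proof amounts to reading off the definition.
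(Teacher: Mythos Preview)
Your proposal is correct and follows exactly the paper's approach: the paper simply states the corollary as an immediate consequence of the identity $\Delta^{III}\boldsymbol{n}=2\boldsymbol{n}$ in (\ref{24}), with no further argument. Your additional remark verifying that $\boldsymbol{n}$ is nonconstant (using the standing hypothesis $K\neq 0$) is a small but legitimate refinement that the paper leaves implicit.
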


We consider now the surface $S$ of finite $III$-type 1. Then we have $\triangle^{III}\boldsymbol{x}=\lambda\boldsymbol{x}$. From (\ref{23}) we get $grad^{III}R- R\boldsymbol{n}= \lambda\boldsymbol{x}$. Taking the inner product of both sides of this equation with $\boldsymbol{n}$ we find $R= \lambda w$, where $w:=-<\boldsymbol{n},\boldsymbol{x}>$ is the support function. From the well known formula
\begin{equation*}
-\triangle^{III}w +2w=R,
\end{equation*}
we find
\begin{equation*}
\triangle^{III}w=(2-\lambda)w,\ \ \ \ \triangle^{III}R=(2-\lambda)R.
\end{equation*}

Thus, we also have the following \cite{A6}:
\begin{theorem}
\label{T7} Let $S$ be a surface in $\mathbb{E}^{3}$ of finite $III$-type 1 with corresponding eigenvalue $\lambda$. Then the support function $w$ and the sum of the principal radii of curvature $R$ are of eigenfunctions of the Laplacian $\triangle^{III}$ with corresponding eigenvalue $2-\lambda$.
\end{theorem}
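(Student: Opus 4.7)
The plan is to start from the type-1 hypothesis $\Delta^{III}\boldsymbol{x}=\lambda\boldsymbol{x}$, substitute the explicit formula (\ref{23}) for $\Delta^{III}\boldsymbol{x}$, and then decouple tangential and normal components to convert a vector equation into a scalar relation. Concretely, equating $\mathrm{grad}^{III}R-R\boldsymbol{n}=\lambda\boldsymbol{x}$ and taking the inner product with $\boldsymbol{n}$ kills the gradient term, since $\mathrm{grad}^{III}R$ is a tangent vector field (any Beltrami gradient lies in the tangent plane, because it is built from $u^{i}$-derivatives of a scalar function). What remains is $-R=\lambda\langle\boldsymbol{n},\boldsymbol{x}\rangle=-\lambda w$, i.e. $R=\lambda w$.

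Next I would bring in the classical identity $-\Delta^{III}w+2w=R$, which the paper records as well known. Substituting the just-obtained relation $R=\lambda w$ into it immediately yields
\begin{equation*}
\Delta^{III}w=(2-\lambda)w,
\end{equation*}
so $w$ is an eigenfunction of $\Delta^{III}$ with eigenvalue $2-\lambda$. For the second conclusion I would simply apply $\Delta^{III}$ to $R=\lambda w$ and use linearity of the operator together with the identity just proved:
\begin{equation*}
\Delta^{III}R=\lambda\,\Delta^{III}w=\lambda(2-\lambda)w=(2-\lambda)R,
\end{equation*}
which is the claimed eigenfunction property for $R$.

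There is no real obstacle in this argument once (\ref{23}), (\ref{24}) and the support-function identity are taken as inputs: the whole proof is a one-step projection onto $\boldsymbol{n}$ followed by an algebraic substitution. The only point deserving a short justification is the tangentiality of $\mathrm{grad}^{III}R$, which is what allows the normal projection to isolate $R$ cleanly. If one wanted a self-contained derivation of $-\Delta^{III}w+2w=R$, it would follow from differentiating $w=-\langle\boldsymbol{n},\boldsymbol{x}\rangle$ twice, applying (\ref{22}) with $a^{ij}=e^{ij}$, and invoking $\Delta^{III}\boldsymbol{n}=2\boldsymbol{n}$ from (\ref{24}); but since this identity is already available in the paper, I would cite rather than reprove it.
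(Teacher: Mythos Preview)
Your proposal is correct and follows essentially the same route as the paper: starting from $\Delta^{III}\boldsymbol{x}=\lambda\boldsymbol{x}$, using (\ref{23}) and projecting onto $\boldsymbol{n}$ to obtain $R=\lambda w$, then substituting into the identity $-\Delta^{III}w+2w=R$ to conclude. You merely spell out the tangentiality of $\mathrm{grad}^{III}R$ and the linearity step for $\Delta^{III}R$ a bit more explicitly than the paper does.
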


Let $S^{*}$ be a parallel surface of $S$ in (directed) distance $\mu$ = const. $\neq 0$, so that $1-2\mu H+\mu^{2}K \neq 0$. Then $S^{*}$ possesses the position vector
\begin{equation*}  \label{24.1}
\boldsymbol{x}^{*} = \boldsymbol{x}+\mu \boldsymbol{n}.
\end{equation*}

Denoting by $H^{*}$ the mean and by $K^{*}$ the Gauss curvature of $S^{*}$ we mention the following relations \cite{H1}
\begin{equation*}  \label{24.2}
K^{*}=\frac{K}{1-2\mu H+\mu^{2}K},
\end{equation*}
\begin{equation*}  \label{24.3}
H^{*}=\frac{H-\mu K}{1-2\mu H+\mu^{2}K},
\end{equation*}

Hence we get
\begin{equation}  \label{24.4}
R^{*}=\frac{2H^{*}}{K^{*}} = R-\mu .
\end{equation}

Furthermore the surfaces $S$, $S^{*}$ have common unit normal vector and spherical image. Thus $III = III^{*}$ and
\begin{equation*}  \label{24.5}
\Delta ^{III^{*}} = \Delta ^{III}.
\end{equation*}

%$S$ be a surface of III-type k with position vector x = x(u1,u2) and
We prove now the following theorem for later use.
\begin{theorem}
\label{T8} Let $S$ be a minimal surface in $\mathbb{E}^{3}$. Then $S^{*}$ is parallel surface of $S$ if and only if the principal radii of curvature $R^{*}$ of $S^{*}$ is constant.
\end{theorem}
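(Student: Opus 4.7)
The proof hinges on a single prior formula, namely (\ref{24.4}), which gives $R^{*}=R-\mu$ for any parallel pair $S,S^{*}$. Combined with the standing hypothesis that the Gauss curvature never vanishes, the identity $R=2H/K$ makes the minimality of $S$ equivalent to the pointwise vanishing of $R$. The plan is to play this equivalence against (\ref{24.4}) in both directions.

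For the direct implication I would assume $S$ minimal, so $H\equiv 0$ and hence $R\equiv 0$ since $K\neq 0$. Plugging into (\ref{24.4}) immediately yields $R^{*}=-\mu$, which is constant on $S^{*}$ because the signed distance $\mu$ is prescribed and constant.

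For the converse I would start from the hypothesis $R^{*}\equiv c$ on $S^{*}$ and produce the minimal surface of which $S^{*}$ is a parallel. Concretely, I would set $\boldsymbol{x}:=\boldsymbol{x}^{*}+c\,\boldsymbol{n}^{*}$, which exhibits $S^{*}$ as a parallel surface of $S$ at distance $\mu=-c$ along the common unit normal (recall from the paragraph preceding (\ref{24.4}) that parallel surfaces share their spherical image). Reading (\ref{24.4}) in the form $R=R^{*}+\mu$ then gives $R=c-c=0$, and a second appeal to $K\neq 0$ forces $H\equiv 0$, so $S$ is indeed minimal.

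The computations are all immediate, so the only delicate point I anticipate is interpretive rather than technical: in the converse direction the minimal surface $S$ must be treated as constructed from $S^{*}$ rather than as prescribed in advance, and one should briefly verify that $\boldsymbol{x}^{*}+c\,\boldsymbol{n}^{*}$ does define a regular surface, for which the nondegeneracy condition $1-2\mu H+\mu^{2}K\neq 0$ at $\mu=-c$ (already implicit in the paragraph introducing (\ref{24.4})) plays exactly the role required.
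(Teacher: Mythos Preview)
Your argument is correct and takes a more direct route than the paper. For the converse the paper does not simply invert (\ref{24.4}); instead it brings in the finite-type machinery: from $R^{*}=\text{const.}\neq 0$ it quotes Theorem~4.4 of \cite{S1} to conclude that $S^{*}$ is of null $III$-type~$2$, writes $\boldsymbol{x}^{*}=\boldsymbol{x}_{1}+\boldsymbol{x}_{2}$ with $\Delta^{III}\boldsymbol{x}_{1}=0$ and $\Delta^{III}\boldsymbol{x}_{2}=\lambda_{2}\boldsymbol{x}_{2}$, combines this with (\ref{23}) to identify $\boldsymbol{x}_{2}$ as a constant multiple of $\boldsymbol{n}$, and finally appeals to Theorem~3.1 of \cite{S1} to deduce that $\boldsymbol{x}_{1}$ is minimal. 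Your approach bypasses these external citations by constructing the candidate minimal surface outright as a specific parallel of $S^{*}$ and reading off $R=0$ from (\ref{24.4}). The paper's route makes the link to the $III$-type classification explicit, which is the running theme of the article and is reused in Section~3; your route is self-contained and needs nothing beyond (\ref{24.4}).

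One caveat on the regularity check you flag: the nondegeneracy hypothesis $1-2\mu H+\mu^{2}K\neq 0$ recorded before (\ref{24.4}) involves the curvatures of $S$, which in your converse is the surface being \emph{constructed}, so it is not ``already implicit''. What you actually need is the companion condition $1-2cH^{*}+c^{2}K^{*}\neq 0$ in terms of the known curvatures of $S^{*}$, and this has to be argued (it follows readily from $R^{*}=2H^{*}/K^{*}$ being the chosen constant). The paper carries out the analogous verification via the Weingarten equations; see (\ref{24.14}).
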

\begin{proof}
Suppose that $S$ is minimal surface in $\mathbb{E}^{3}$, which is defined on a simply connected domain $D$ in the $(u^{1}, u^{2})$-plane. Let
\begin{equation*}
S^{*}:\boldsymbol{x}^{*} = \boldsymbol{x}+\mu \boldsymbol{x},\ \ \ \mu \in \mathbb{R},  \ \ \ \mu \neq 0
\end{equation*}
is parallel surface of $S$.

From (\ref{24.4}) and taking into account $H = 0$, we find $R^{*}= -2\mu$ = const. Hence the first part of the theorem is proved.

Conversely, let $R^{*}$ = const. $\neq 0$. Then from Theorem (4.4) (see \cite{S1}), $S^{*}$ is of null $III$- type 2. Therefore from (\ref{1}) there exist nonconstant eigenvectors $\boldsymbol{x_{1}}(u^{1}, u^{2})$ and $\boldsymbol{x_{2}}(u^{1}, u^{2})$ defined on the same domain $D$ such that
\begin{equation}  \label{24.51}
\boldsymbol{x}^{*}= \boldsymbol{x_{1}} + \boldsymbol{x_{2}},
\end{equation}
where
\begin{equation*}
\Delta ^{III}\boldsymbol{x_{1}} = \lambda_{1}\boldsymbol{x_{1}},\ \ \ \Delta ^{III}\boldsymbol{x_{2}} = \lambda_{2}\boldsymbol{x_{2}},
\end{equation*}
and here we have $\lambda_{1}$ = 0 since $S^{*}$ is of null $III$- type 2.

Once we have $\Delta ^{III}\boldsymbol{x}^{*}= \Delta ^{III}\boldsymbol{x_{1}} + \Delta ^{III}\boldsymbol{x_{2}}$, it then follows that
\begin{equation}  \label{24.6}
\Delta ^{III}\boldsymbol{x}^{*}= \lambda_{2}\boldsymbol{x_{2}}.
\end{equation}

On the other hand, since $R^{*}$ = const. $\neq 0$, we get
\begin{equation}  \label{24.7}
\Delta ^{III}\boldsymbol{x}^{*}= -R^{*}\boldsymbol{n}.
\end{equation}

Thus from (\ref{24.6}) and (\ref{24.7}), one finds
\begin{equation*}  \label{24.8}
\lambda_{2}\boldsymbol{x_{2}}= -R^{*}\boldsymbol{n},
\end{equation*}
or
\begin{equation*}  \label{24.8}
\boldsymbol{x_{2}}=c\boldsymbol{n},
\end{equation*}
where $c =\frac{-R^{*}}{\lambda_{2}}$, and then (\ref{24.51}) becomes
\begin{equation}  \label{24.9}
\boldsymbol{x}^{*}= \boldsymbol{x_{1}} + c\boldsymbol{n}.
\end{equation}

The differential of the above equation is
\begin{equation}  \label{24.10}
d\boldsymbol{x}^{*}= d\boldsymbol{x_{1}} + cd\boldsymbol{n}.
\end{equation}

Taking the inner product of both sides of (\ref{24.10}) with $\boldsymbol{n}$ yields
\begin{equation}  \label{24.11}
<d\boldsymbol{x_{1}} ,\boldsymbol{n}> = 0.
\end{equation}

Now we want to show that $\boldsymbol{x_{1}}(u^{1}, u^{2})$ is a regular parametric representation of a surface in $\mathbb{E}^{3}$. It is enough to prove that
\begin{equation}  \label{24.12}
\frac{\partial\boldsymbol{x_{1}}}{\partial u^{1}} \times \frac{\partial\boldsymbol{x_{1}}}{\partial u^{2}} \neq \boldsymbol{0}
\end{equation}
for each $(u^{1}, u^{2})\in D$, where $\times$ is the Euclidean cross product. We have
\begin{equation}  \label{24.13}
\boldsymbol{x_{1}} = \boldsymbol{x^{*}} - \mu\boldsymbol{n}.
\end{equation}
Using the Wingarten equations \cite{H1}
\begin{equation*}  \label{24.131}
\frac{\partial \boldsymbol{n}}{\partial u^{i}} = -b_{ij}g^{jr}\frac{\partial \boldsymbol{x^{*}}}{\partial u^{r}}
\end{equation*}
and the equation (\ref{24.13}), it follows from (\ref{24.12}) that
\begin{equation}  \label{24.14}
\big[\frac{\partial \boldsymbol{x^{*}}}{\partial u^{1}}-\mu \frac{\partial \boldsymbol{n}}{\partial u^{1}}\big] \times \big[\frac{\partial \boldsymbol{x^{*}}}{\partial u^{2}}-\mu \frac{\partial \boldsymbol{n}}{\partial u^{2}}\big] = (1-2\mu H+\mu^{2}K)(\frac{\partial\boldsymbol{x^{*}}}{\partial u^{1}} \times \frac{\partial\boldsymbol{x^{*}}}{\partial u^{2}}) \neq \boldsymbol{0}.
\end{equation}

Hence, on account of (\ref{24.14}) and (\ref{24.11}), we conclude that $\boldsymbol{x_{1}}(u^{1}, u^{2})$ is a regular parametric representation of a surface in $\mathbb{E}^{3}$ with $\boldsymbol{n}$ it's Gauss map.

Since $\Delta ^{III}\boldsymbol{x_{1}}=\boldsymbol{0}$. Consequently, from Theorem (3.1) (see \cite{S1}), $\boldsymbol{x_{1}}(u^{1}, u^{2})$ is a minimal surface. Thus from (\ref{24.9}) we obtain that $S^{*}$ is parallel surface of a minimal.
\end{proof}
Now we prove our main result.
\section{Proof of the main theorem}

A surface of revolution is formed by revolving a plane curve about a line in $\mathbb{E}^{3}$. Let E be a plane in $\mathbb{E}^{3}$, let $l$ and $C$ be a line and a point set of a plane curve which does not intersect $l$ in $E$, respectively. When $C$ is revolved in $\mathbb{E}^{3}$ about $l$, the resulting point set $S$ is called the surface of revolution generated by $C$. In this case, $C$ is called the profile curve of $S$ and the line $l$ is called the axis of revolution of $S$. For convenience we choose $E$ to be the $xz$-plane and $l$ to be the $z$-axis. We shall assume that the point set $C$ has a parametrization
\begin{equation*}
\varrho : J = (a, b) \longrightarrow C
\end{equation*}
defined by $u \longrightarrow (f(u), 0, g(u))$, which is differentiable \cite{K8}. Without loss of generality, we can assume that $f(u)$ is a positive function, and $g$ is a function on $J$. On the other hand, a subgroup of the rotation group which fixes the vector $(0, 0, 1)$ is generated by
\begin{equation*}
\left(
\begin{array}{ccc}
\cos v & 0-\sin v & 0 \\
\sin v & \cos v & 0 \\
0 & 0& 1
\end{array}%
\right),
\end{equation*}
for any $v \in [0,2\pi) $ . Hence the surface $S$ of revolution can be written as
\begin{equation*}
\boldsymbol{x}(u,v)=\left(
\begin{array}{ccc}
\cos v & -\sin v & 0 \\
\sin v & \cos v & 0 \\
0 & 0& 1
\end{array}%
\right) \left(
\begin{array}{ccc}
f(u)  \\
0  \\
g(u)
\end{array}%
\right),
\end{equation*}
or
\begin{equation}  \label{31}
\boldsymbol{x}(u,v)= \big(f(u)\cos v, f(u)\sin v, g(u) \big), \ \ \ u\in J, \ \ \ v \in [0,2\pi)
\end{equation}
(For the parametric representation of surfaces of revolution, we also refer the reader to \cite{D4}).

Without loss of generality, we may assume that $\varrho$ has the arc-length parametrization, i.e., it satisfies

\begin{equation}  \label{32}
(f\prime)^{2}+ (g\prime)^{2}=1
\end{equation}
where $\prime := \frac{d}{du}$.
Furthermore it is $f\prime g\prime \neq 0$, because otherwise $f$ = const. or $g$ = const. and $S$ would be a circular cylinder or part of a plane, respectively. Hence $S$ would consist only of parabolic points, which has been excluded.

Using the natural frame $\{{\boldsymbol{x}_{u}, \boldsymbol{x}_{v}}\}$ of $S$ defined by
\begin{equation*}
\boldsymbol{x_{u}}=\left( f\prime(u)\cos v, f\prime(u)\sin v, g\prime(u)\right),
\end{equation*}
and
\begin{equation*} \newline
\boldsymbol{x_{v}}=\left( -f(u)\sin v, f(u)\cos v, 0\right),
\end{equation*}

\noindent the components $g_{ij}$ of the first fundamental form in (local) coordinates are the following
\begin{equation*}
g_{11}= <\boldsymbol{x_{u}}, \boldsymbol{x_{u}}> = 1,\ \ \ g_{12}= <\boldsymbol{x_{u}}, \boldsymbol{x_{v}}> = 0,\ \ \ g_{22}= <\boldsymbol{x_{v}}. \boldsymbol{x_{v}}>= f^{2},
\end{equation*}

Denoting by $R_{1}, R_{_{2}}$ the principal radii of curvature of $S$ and $\kappa$ the curvature of the curve $C$, we have
\begin{equation*}  \label{33}
R_{1} = \kappa, \ \ \ \ R_{2}= \frac{g\prime}{f}.
\end{equation*}

The mean curvature and the Gauss curvature of $S$ are respectively
\begin{align*}  \label{34}
2H = R_{1}+R_{2} = \kappa+ \frac{g\prime}{f},\ \ \ \ K = R_{1}R_{2} =\frac{\kappa g\prime}{f}= -\frac{f\prime\prime}{f}.
\end{align*}

The Gauss map $\boldsymbol{n}$ of $S$ is given by
\begin{equation}  \label{35}
\boldsymbol{n}(u,v)= \frac{\boldsymbol{x_{u}}\times\boldsymbol{x_{v}}}{\sqrt{\mathfrak{g}}} = -\big(g\prime\cos v, g\prime\sin v, f\prime \big),
\end{equation}
where $\mathfrak{g}: = \det (g_{ij})$.
Now, by using the natural frame $\{{\boldsymbol{n}_{u}, \boldsymbol{n}_{v}}\}$ of $S$ defined by
\begin{equation*}
\boldsymbol{n_{u}}=-\big(g\prime\prime\cos v, g\prime\prime\sin v, f\prime\prime \big),
\end{equation*}
and
\begin{equation*} \newline
\boldsymbol{n_{v}}=-\big(-g\prime\sin v, g\prime\cos v, 0 \big),
\end{equation*}

\noindent the components $e_{ij}$ of the third fundamental form in (local) coordinates are the following
\begin{equation*}
e_{11}=<\boldsymbol{n_{u}}, \boldsymbol{n_{u}}> = (g\prime\prime)^{2}+(f\prime\prime)^{2},\ \ \ e_{12}=<\boldsymbol{n_{u}}, \boldsymbol{n_{v}}> = 0,\ \ \ e_{22}=<\boldsymbol{n_{v}}, \boldsymbol{n_{v}}> = (g\prime)^{2}.
\end{equation*}

The Beltrami operator $\Delta ^{III}$ of the third fundamental form in terms of local coordinates $(u, v)$ of $S$ can be expressed as follows

\begin{eqnarray}  \label{36}
\Delta ^{III} =-\frac{1}{\kappa^{2}}\frac{\partial^{2}}{\partial u^{2}}-\frac{1}{(g\prime)^{2}} \frac{\partial^{2}}{\partial v^{2}}
+\frac{g\prime\kappa\prime-\kappa g\prime\prime}{\kappa^{3}g\prime}\frac{\partial}{\partial u}.
\end{eqnarray}

In view of (\ref{32}) we can put
\begin{equation}  \label{37}
f\prime= \cos\varphi, \ \ \ g\prime = \sin\varphi,
\end{equation}
where $\varphi$ is a function of $u$. Then $\kappa = \varphi\prime$ and the unit normal vector $\boldsymbol{n}$ of $S$ is given by

\begin{equation}   \label{38}
\boldsymbol{n}(u,v)= -\big(\sin\varphi\cos v, \sin\varphi\sin v, \cos\varphi \big).
\end{equation}

Relation (\ref{36}) becomes
\begin{eqnarray}  \label{39}
\Delta ^{III} =-\frac{1}{(\varphi\prime^){2}}\frac{\partial^{2}}{\partial u^{2}}-\frac{1}{\sin^{2}\varphi} \frac{\partial^{2}}{\partial v^{2}}
+\frac{\varphi\prime\prime}{(\varphi\prime)^{3}}-\frac{\cos\varphi}{\varphi\prime\sin\varphi}\frac{\partial}{\partial u}.
\end{eqnarray}

For the sum of the principal radii of curvature $R = \frac{1}{R_{1}}+\frac{1}{R_{2}} =\frac{2H}{K}$, one finds
\begin{equation}   \label{40}
R= \frac{1}{\varphi\prime}+\frac{f}{\sin\varphi}.
\end{equation}

On differentiating relation (\ref{40}) we obtain
\begin{equation}   \label{41}
R\prime= -\frac{\varphi\prime\prime}{(\varphi\prime)^{2}}-\frac{f\varphi\prime\cos\varphi}{\sin^{2}\varphi}+\frac{\cos\varphi}{\sin\varphi}.
\end{equation}
Let $(x_{1}, x_{2}, x_{3})$ be the coordinate functions of the parametric representation (\ref{31}). By virtue of (\ref{39}), one can find

\begin{equation}  \label{42}
\Delta^{III}x_{1} =\Delta^{III}(f\cos v) = \bigg(\frac{\varphi\prime\prime\cos\varphi}{(\varphi\prime)^{3}}+\frac{2\sin\varphi}{\varphi\prime} -\frac{1}{\varphi\prime\sin\varphi}+\frac{f}{\sin^{2}\varphi}\Bigg)\cos v,
\end{equation}
\begin{equation}  \label{43}
\Delta^{III}x_{2} =\Delta^{III}(f\sin v) = \bigg(\frac{\varphi\prime\prime\cos\varphi}{(\varphi\prime)^{3}}+\frac{2\sin\varphi}{\varphi\prime} -\frac{1}{\varphi\prime\sin\varphi}+\frac{f}{\sin^{2}\varphi}\Bigg)\sin v,
\end{equation}
\begin{equation}  \label{44}
\Delta^{III}x_{3} =\Delta^{III}(g) = \frac{\varphi\prime\prime\sin\varphi}{(\varphi\prime)^{3}}-\frac{2\cos\varphi}{\varphi\prime},
\end{equation}

From (\ref{40}) and (\ref{41}), equations (\ref{42}), (\ref{43}) and (\ref{44}) become respectively
\begin{equation}  \label{45}
\Delta^{III}x_{1} = \bigg(R\sin\varphi-\frac{R\prime\cos\varphi}{\varphi\prime}\Bigg)\cos v,
\end{equation}
\begin{equation}  \label{46}
\Delta^{III}x_{2} = \bigg(R\sin\varphi-\frac{R\prime\cos\varphi}{\varphi\prime}\Bigg)\sin v,
\end{equation}
\begin{equation}  \label{47}
\Delta^{III}x_{3} = -R\cos\varphi-\frac{R\prime\sin\varphi}{\varphi\prime}.
\end{equation}

We distinguish two cases:

\textit{Case I.}  $R \equiv 0$.

Then $H \equiv 0$. Consequently $S$, being a minimal surface of revolution, is a catenoid \cite{S2}.
%\begin{equation}  \label{48}
%\boldsymbol{x}(u,v)= \big(a\cosh u\cos v, a\cosh u\sin v, au \big), \ \ \ a = const. \neq 0.
%\end{equation}

\textit{Case II.}  $R = const. \neq 0$.

From (\ref{45}), (\ref{46}) and (\ref{47}) we obtain
\begin{equation}  \label{49}
\Delta^{III}x_{1} = R\sin\varphi\cos v,
\end{equation}
\begin{equation}  \label{50}
\Delta^{III}x_{2} = R\sin\varphi\sin v,
\end{equation}
\begin{equation}  \label{51}
\Delta^{III}x_{3} = -R\cos\varphi.
\end{equation}

Let $(n_{1}, n_{2}, n_{3})$ be the coordinate functions of the parametric representation (\ref{38}). From (\ref{38}), relations (\ref{49}), (\ref{50}) and (\ref{51}) can be written
\begin{equation}  \label{52}
\Delta^{III}x_{1} = -Rn_{1},\ \ \ \Delta^{III}x_{2} = -Rn_{2},\ \ \ \Delta^{III}x_{3} = -Rn_{3},\ \ \
\end{equation}
and hence
\begin{equation}  \label{53}
\Delta^{III}\boldsymbol{x}= -R\boldsymbol{n}.
\end{equation}

In view of (\ref{24}) and (\ref{53}) we have
\begin{equation}
(\Delta^{III})^{r}\boldsymbol{x}= -(2^{r-1})R\boldsymbol{n}.
\end{equation}  \label{54}

Now, if $S$ is of finite type, then there exist real numbers, $c_{1}, c_{2}, ... , c_{m}$ such that

\begin{equation}  \label{55}
(\Delta ^{III})^{m}\boldsymbol{x}+c_{1}(\Delta ^{III})^{m-1}\boldsymbol{x}+...+c_{m-1}\Delta ^{III}\boldsymbol{x}+c_{m}\boldsymbol{x}=\mathbf{0}.
\end{equation}

From (\ref{53}) and (3.21), equation (\ref{55}) becomes
\begin{equation}  \label{56}
-2^{m-1}R\boldsymbol{n}-2^{m-2}c_{1}R\boldsymbol{n}-...-c_{m-1}R\boldsymbol{n}+c_{m}\boldsymbol{x}= \boldsymbol{0}
\end{equation}
or
\begin{eqnarray}  \label{56}
c\boldsymbol{n} + c_{m}\boldsymbol{x}= \boldsymbol{0},
\end{eqnarray}
where $c = -(2^{m-1}+2^{m-2}c_{1}+...+c_{m-1})R = constant$.

Now, if $c_{m}\neq 0$, then (\ref{56}) yields $\boldsymbol{x} = -\frac{c}{c_{m}}\boldsymbol{n}$, from which we have $|\boldsymbol{x}| =|\frac{c}{c_{m}}|$ and hence $S$ is a sphere. Thus according to Theorem (3.3) (see \cite{S1}), $S$ is of finite $III$-type 1.
If $c_{m} = 0$, then $S$ is of 0- type $m$. Since $R = const.$, therefore from Theorem (4.4) (see \cite{S1}) and Theorem (\ref{T8}), $S$ is of null $III$-type
2 which is parallel surface of a minimal.

%Consequently, we have the following

%\begin{theorem}
%\label{C1.1} Every anchor ring in the Euclidean 3-space is of infinite type.
%\end{theorem}

%Moreover, by a direct computation, we obtain Thus, by induction, It can be seen that it is easy to see that

%\begin{acknowledgement}
%end{acknowledgement}

%\bibliography{...}
%\bibitem{R2} W. Blaschke, und K. Leichtwiss, Elementare Differentialgeometrie. Springer, Berlin 1973.

\end{document}